\newtheorem{theorem}{Theorem}[section]
\newtheorem{lemma}[theorem]{Lemma}
\newtheorem{definition}[theorem]{Definition}
\theoremstyle{remark}
\def \1{\mathbb {1}}
\def \RM{\mathbb {R}}
\def \NM{\mathbb{N}}
\def \ZM{\mathbb{Z}}
\def \CM{\mathbb{C}}
\def \d{\partial}
\def\dt{\delta} 
\def\a{\alpha}
\def\b{\beta}
\def\e{\varepsilon}  
\def\g{\gamma}
\def\G{\Gamma}
\def \to{\longrightarrow} 
\def \< {{\langle }}
\def \> {{\rangle }}
\def \( {\left( }
\def \) {\right) }
\newcommand{\Lt}{{\mathcal L}}
\newcommand{\Ot}{{\mathcal O}}
\newcommand{\Ut}{{\mathcal U}}
\newcommand{\Vt}{{\mathcal V}}
\newcommand{\lra}{\longrightarrow}
\renewcommand{\S}{\Sigma}
\newcommand{\la}{\langle}
\newcommand{\ra}{\rangle}
\renewcommand{\o}{\omega}
\title[Non-degenerate Birkhoff functions]{Non-degenerate Birkhoff functions }
\author{  Mauricio  Garay and Duco van Straten}
\begin{document}
\begin{abstract}  It is known after the works of Mahler, Kleinbock, Margulis, Sprind\v{z}uk that very well approximated numbers on a manifold form a zero measure set, assuming non-degeneracy conditions.  These non-degeneracy conditions are,
  in many applications, difficult to check. We propose here a setting in which these are automatic. Rather than functions, we consider correspondences which have a better behaviour.
\end{abstract}
\maketitle

\section{Introduction}
In 1942, Siegel proved that a biholomorphic germ
$$f:(\CM,0) \to (\CM,0), \ z \mapsto \a z +o(|z|) $$
can be linearised, assuming an arithmetic condition on the coefficient
$\a$~\cite{Siegel_linearisation}. Ever since, the field of Diophantine
approximation has been of relevance for dynamical systems and this led
Arnold, Margulis and Pyartli and others to reconsider the classical works
on Diophantine approximation and extend their scope~\cite{Kleinbock_Margulis,Pyartli}. Nowadays, the work of Kleinbock and Margulis finds application
in KAM theory in order to prove that in certain situations the quasiperiodic
solutions make up a set of positive measure in phase space~\cite{Russmann_KAM}.
To apply these results, one needs to assume certain {\em non-degeneracy conditions}, introduced in the works of Mahler, Sprind\v{z}uk, Margulis and Kleinbock~\cite{Kleinbock_Margulis,Mahler_1932,Sprindzhuk,Sprindzhuk_2}. To our knowledge the weakest of these conditions was introduced by Kleinbock:
\begin {definition}[\cite{Kleinbock}] Let $U$ be an open set in $\RM^d$. A $C^k$-map $B:U \to \RM^e$ is called {\em $KMS$-non-degenerate (for Kleinbock-Margulis-Sprind\v{z}uk) in an affine space $\Lt$} if:
\begin{enumerate}
 \item $B(U) \subset \Lt$.
 \item For some $l \leq k$ the partial derivatives at any point of $U$ of order $l$ span the vector space associated to $\Lt$. 
\end{enumerate}
\end {definition}
It is not difficult to see that real analytic maps are always KMS non-degenerate~(see~\ref{SS::Kleinbock}) and this will constitute our starting point. A simple example of a KMS-degenerate function is given by the flat function:
$$ \RM \to \RM,\ x \mapsto e^{-1/x^2}$$

 Assuming the KMS non-degeneracy condition, one may estimate the preimage under the map $B$ of points which are badly approximated by the integer lattice $\ZM^e \subset \RM^e$ in various ways. For instance, under the KMS non-degeneracy condition, Kleinbock-Margulis and Kleinbock proved that the preimages of points satisfying an adequate Diophantine condition form a full measure set (\cite[Theorem A]{Kleinbock_Margulis}, \cite[Theorem 1.2]{Kleinbock}, see also~\cite{arithmetic}). 
  
Unfortunately the verification of non-degeneracy conditions for the frequency map of a dynamical system often rely on tedious computations and are notoriously difficult to establish. For instance, it took  many years before Féjoz and Herman proposed a proof of the non-degeneracy condition for the $N$-body problem initiated by Arnol'd in the sixties~(\cite{Arnold_trois_corps,Fejoz_Herman}, see also~\cite{ChierchiaPinzari}). In this paper, we will show that for the type of normal forms introduced by the authors in~\cite{Symplectic_torus,Versal_fields,Herman_conjecture} the KMS-non degeneracy conditions are essentially automatic. So let us now explain what type of generalisation of the usual setting is needed by considering an instructive example.
\subsection{The Poincar\'e correspondences}
\label{SS::Poincare}

In {\em Les M\'ethodes Nouvelles de la M\'ecanique C\'eleste}~\cite[§ 119]{Poincare_Methodes}. 
Poincar\'e considered the series
$$s=1+\frac{x}{1+y}+\frac{x^2}{1+2y}+\ldots=\sum_{k=0}^{\infty} \frac{x^k}{1+k y}$$
as a prototype of the kind of series that appear in perturbation theory~(see also the paper \cite{Sauzin2014} of Sauzin for the resurgence properties). In fact, similar expressions play a role in many areas of mathematics, for example in the theory of $q$-analogs of classical series (logarithm, exponential, hypergeometric), see for instance~\cite{Sondow_Zudilin}.

Assume now that we want to solve the equation
\[0=f(x,y):=-y+\frac{x}{1+y}+\frac{x^2}{1+2 y}+ +\frac{x^3}{1+3 y}+\ldots .\]

If we truncate the series at $x^n$ we obtain the rational functions
\[ f_n(x,y)=-y+\sum_{k=1}^n \frac{x^k}{1+ky} =\frac{P_n(x,y)}{D_n(y)}\]
which determines a polynomial  $P_n(x,y)$ in the variables $x$ and $y$:
\[P_1(x,y)=-(1+y)y+x,\;\; P_2(x,y)=-(1+y)(1+2y)y+(1+2y)x+x^2, \ldots\]
\[D_1(y)=1+y,\;\;\;D_2(y)=(1+y)(1+2y),\;\;\; D_3(y)=(1+y)(1+2y)(1+3y),\ldots\]
These polynomials define algebraic curves
$$V_n:=\{ (x,y) \in \CM^2:P_n(x,y)=0 \} \subset \CM \times \CM$$
of degree $(n+1)$ in $y$ and degree $n$ in $x$. These curves exhibit a peculiar bending behaviour caused by the poles, located on horizontal lines, defined by the denominators $D_n(y)$ that were cleared out. \\

Such a polynomial relation defines $y$ as a {\em multivalued function} of $x$, sometimes called a {\em correspondence}. In a neighbourhood of $x=0,y=0$, the $n$-th curve $V_n$ is the graph of a holomorphic function $b_n$ that can be expanded in power series:
\[
\begin{array}{rcl}
    b_3(x)&=&x+2x^5-14x^6+64x^7+\ldots\\
    b_4(x)&=&x+x^4-3x^5+7x^6-19x^7+\ldots\\
    b_5(x)&=&x+x^4-2x^5+x^6+12x^7+\ldots\\
    b_6(x)&=&x+x^4-2x^5+2x^6+5x^7+\ldots\\
\end{array}
\]
It is not difficult to see that $b_{n+1}(x)=b_n(x)+o(x^n)$, so these series converge coefficientwise to a formal power series solution $b(x) \in \CM[[x]]$ to the equation $f(x,y)=0$
\[ b(x)=x+x^4-2x^5+2x^6+6x^7-35x^8+86x^9+\ldots\]
We call it the {\em Birkhoff expansion} of the series $f$ and its truncation are called the {\em Birkhoff polynomials} of the series.\\

\begin{figure}[htb!]
\includegraphics[width=0.45\linewidth]{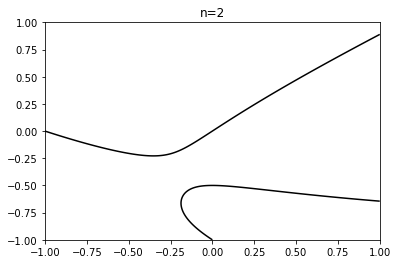}
\includegraphics[width=0.45\linewidth]{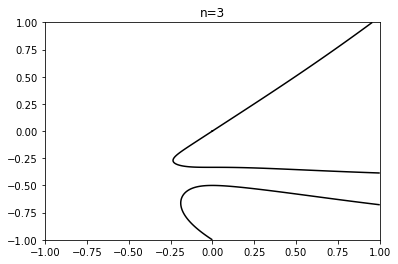}
\end{figure}
\begin{figure}[htb!]
\includegraphics[width=0.45\linewidth]{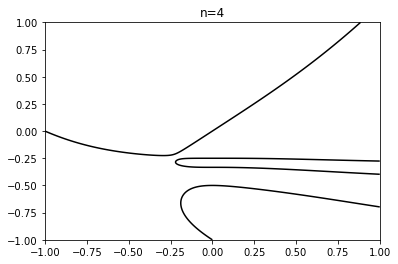}
\includegraphics[width=0.45\linewidth]{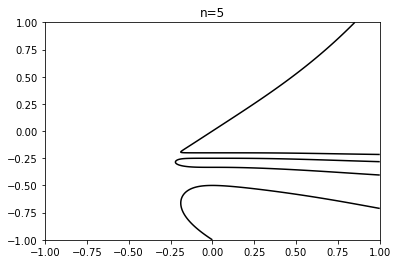}\\
{\small Correspondences in the Poincar\'e example.}
\end{figure}\ \\

The domain of definition of the $b_n$ shrinks as $n \to \infty$ due to the peculiar winding, which provides a geometrical reason for the divergence of the series $b(x)$. This is an illustration of a general divergence theorem for such expressions, that we intend to publish elsewhere~\cite{Meandromorphic}. On the other hand, the pictures strongly suggest that the part of curves in the part $y>0$ nicely converge to the graph of a $C^{\infty}$-function $B$.

We may construct such a $C^{\infty}$ function as follows: start with closed disks
$$D_{\rho}:=\{x \in \CM: |x| \le \rho\},\ D_{\eta}:=\{y \in \CM: |y| \le \eta\},\ $$
where $\rho<1$  and remove from the disk $D_{\eta}$ small disks
$D_k:=\{ y \in \CM: |y+\frac{1}{k}| < r_k \}$ around the poles $y=-1/k$.
The distance between two nearby poles being $1/k(k+1)$,
 if we choose $r_k=k^{-\a},\ \a>2$, the complement of the disks $D_k$ contains the origin. We have in the complement of $D_k$ an obvious estimate:
 $$\left|\frac{x^k}{1+ky}\right|\leq k^{\a-1}|x^k| \leq k^{\a-1}\rho^k$$
So for  $\rho <1$, we see that
 the sequence obtained by restricting the rational functions $f_n(x,y)$ to the closed set 
 $$X_{\infty} :=D_{\rho} \times \left( D_{\eta} \setminus \cup_k D_k \right) $$
converges in the $C^0$-topology to a limit function, that we denote in the same way: 
$$f: X_{\infty}  \lra \CM$$

\subsection{Whitney differentiability}
The closed set $X_{\infty}$ does contain $0$, but it is not an interior point as the poles accumulate at the origin. However, the formal power series $b$  that solves the equation $f(x,b(x))=0$ is the Taylor expansion of a function, $C^\infty$ in a sense originally discovered by Borel and now called {\em Whitney differentiability}~\cite{Borel_monogene} Let us explain this property.  

If $X \subset \RM^k$ is a locally closed subset, then a function 
$$g:X \to \RM $$ is called {\em Whitney differentiable} if  there exists a map
$$L:X \to  Hom(\RM^k,\RM)$$
such that
$$g(y)=g(x)+L(x)(y-x)+o(\| x-y\|) .$$
The main difference with usual derivatives is the requirement that the variables $x,y$ play a more
symmetric role. One defines similarly higher derivatives and Whitney $C^{\infty}$ functions. Holomorphicity in the Whitney setting is defined by requiring that the map $L$ is $\CM$-linear
($\CM^d=\RM^{2d}$) and in that case the function was called {\em monogenic} by Borel.

The Whitney extension theorem states that Whitney differentiable functions are restrictions of usual differentiable functions and the Whitney derivative commutes with restriction~\cite{Whitney_extension}. Fefferman showed that the extension operator can be chosen to be bounded linear for any finite level of differentiability~\cite{Fefferman_Whitney}. In particular, the space of functions with bounded Whitney derivatives up to order $k$ form a Banach space, like for ordinary differentiability.

Let us now show that our function $f$ are indeed Whitney differentiable on $X_\infty$. The proof is elementary but is worthwhile understanding. We will keep the notation $f_n$ for the restriction of $f_n$ to $X_\infty$.  

We first create a second sequence $X'_n$ by starting on a slightly bigger disc $D_{\rho+\e}$, $ \rho+\e<1$, and by removing from $D_{\eta}$ disks $D'_k \subset D_k$ of radius smaller than $r_k$, say for instance $r_k/2$, for $k \leq n$,  so that
$X_n \subset X'_n$. 
The Cauchy inequalities give an estimate for the derivatives of a holomorphic function on $X'_n$ restricted to $X_n$:
$$\sup_{(x,y) \in X_n}|\d_x^i\d_y^j g(x,y) | \leq \frac{2^j i! j!}{r_k^j\e^i}\sup_{(x,y) \in X'_n}| g(x,y) | $$

Using the estimate on the complement of  $D'_k$:
 $$\left|\frac{x^k}{1+ky}\right|<2 k^{\a-1}|x^k|<2 k^{\a-1}(\rho+\e)^k,$$
and the Cauchy estimates, one easily deduces  that,  for any $k \geq 0$, the sequence $(f_n)$ is a Cauchy sequence in the $C^k$-Whitney holomorphic sense  and therefore the sum function $f$ is Whitney $C^\infty$
in the holomorphic sense.

 By the Whitney extension theorem, the function $f$ is the restriction to $X_{\infty}$ of a standard $C^\infty$ function
$$F:\CM^2 \to \CM $$
The set $F=0$ serves as $C^{\infty}$-version of the limit correspondence defined by $f=0$.
As $\d_y F(0)=\d_y f(0)=1 \neq 0$, the usual $C^{\infty}$ implicit function
theorem shows that in a neighbourhood of the origin the set  $F=0$ is the
graph of a $C^\infty$ function $B$, that we call a $C^{\infty}$
{\em Birkhoff-function} attached to $f(x,y)$. Clearly, the Taylor-series of $B$ is $b$.

Somehow we proved more than expected, as the function $B$ is now also defined for $y<0$. This may look surprising, if we think of the way the real parts of the curves $V_n$ have multiple components which stretch along the horizontal lines
near the $x$-axis. The resolution of this paradox is that by removing the tubes
$D_{\rho} \times D_k$, we removed the bendings of the curve at the same time.
\subsection{Content of the paper} 
If one wants to apply the Kleinbock-Margulis theorem, one should be able to define and establish the KMS non-degeneracy condition for the correspondence before going to the Birkhoff series and doing the Whitney extension. The purpose of this paper is to show that this is indeed the case.

That the Birkhoff series $b$ does not see any of the bendings may explain
heuristically why establishing the KMS non-degeneracy condition based on considerations of $b$ alone turns out to be very difficult. Our approach consists of working as much as possible with
the {\em correspondences} lying on the product space, rather than with
the implicit functions they define. So most of the paper concerns the definition and analysis of non-degeneracy conditions for correspondences.

In the last part, we explain the application of the results of this paper to
the Herman conjecture. We also hope that our result, elementary as it may look, is of interest independently of the context in which it originally emerged,
and that the deep connection between number theory and dynamical systems can
be further strengthened, in the spirit of Siegel, Arnold and Margulis. 
 
 \section{Statement of the theorem}
 \subsection{Birkhoff functions, series and polynomials}  
 \label{SS::thetheorem}
  We consider the space $\CM^N=\CM^d \times \CM^e$ with coordinates $x=(x_1,\dots,x_d)$, $y=(y_1,\dots,y_e)$ and projections:
  
 $$\xymatrix{ & \CM^N \ar[rd]^q \ar[ld]_p & \\
 \CM^d & & \CM^e 
 }
 $$
We then consider a decreasing chain $X=(X_n)$ of locally closed subsets of $\CM^N$:
\[ X_0 \supset X_1 \supset X_2 \supset \ldots \supset X_\infty :=\cap_n X_n .\]
and assume that the sets $X_n$ are equal to the closure of their interior:
$\overline{X_n^{\circ}}=X_n$.
 We also assume that $X_n$ is of the form
 $$X_n=\Ut \times \Vt_n $$
 where $\Ut \subset \CM^d$ is a neighbourhood of the origin.

Such sequences arise often in constructions where
$X_{n+1}$ is obtained from $X_n$ by removing some open subset.
Note that the closed set $X_{\infty}$ in general could have empty interior.
We also assume to be given a formal series (in multi-index notations):
$$f(x,y)=\sum_{I \in \NM^d } a_I(y)x^I,\ a_I \in \CM\{ y \} $$
where $\CM\{ y \}$ denotes the ring of convergent power series in $y=(y_1,\dots,y_e)$.
We identify the coefficients of the series with functions of $z=(x,y)$ depending only on $y$. We use the multi-index notation and consider the Whitney $C^\a$-norms:
$$\| 	a \|^\a_U= \sup_{ z \in U \cap X_{\infty}} |\d^\a_y a(z)| , \a \in \NM^e $$
where the partial derivatives are taken only in the $y$-direction as the functions involved as coefficients of the series only depend on $y$.
 \begin{definition} We say the series $\sum_{I \in \NM^d } a_I(y)x^I$ is $X$-convergent if
  there is a neighbourhood $U$ of the origin
  such that for any $\a \in \NM^e$, the power series
  \[  \sum_{I \in \NM^d } \| a_I\|^\a_U x^I \in \CM[[x]]\]
 is convergent in $p(U)$.
\end{definition}
An  $X$-convergent series in $U \cap X_\infty$ defines a Whitney $C^\infty$ function that we denote by the same name:
$$f:U \cap X_\infty \to \CM,\ (x,y) \mapsto \sum_{I \in \NM^d } a_I(y)x^I  $$
Note that for $(x_0,y_0) \in U \cap X_\infty$, the map-germ
$$(\CM^d,x_0) \to \CM,\ x \mapsto f(x,y_0)$$
is an ordinary holomorphic map-germ.

We use the notation:
\[ V(f):=\{(x,y) \in U \cap X_\infty \;|\; f(x,y)=0\}\]
where $f=(f_1,\dots,f_k)$ are $X$-convergent series
and we let $(V(f),0)$ be the germ of this set at the origin. 
\begin{definition}
A {\em Birkhoff function-germ} associated to  $X$-convergent series $f=(f_1,\dots,f_e)$, where $\d_y f(0)$ has maximal rank, is a $C^\infty$-function germ: 
$$B:(\CM^d,0) \to (\CM^e,0) $$ whose graph (denoted $\G_B$) satisfies: 
$$(\G_B \cap X_\infty,0)= (V(f),0).$$
\end{definition}
Birkhoff polynomials and Birkhoff series are defined in a similar way by considering infinitesimal  and formal neighbourhoods instead of germs at the origin.

Note that the Taylor expansion of a Birkhoff function $B$ at the origin is the Birkhoff series of $f$. Indeed by the Whitney-Fefferman theorem~(\cite{Fefferman_Whitney}), we may choose for any $k>0$ and any neighbourhood $U$ of the origin a (real) continuous linear extension operator\footnote{We consider independently the real and imaginary parts when doing the extension.} :
$$T:C^k(U \cap X_\infty,\CM) \to C^k(U,\CM)$$
The series
$$F(x,y)=\sum_{I \in \NM^d } T(a_I(y))x^I $$
defines a $C^\infty$ function-germ.
 We have
  $$F(x,b_k(x))=o(\| x\| ^k) $$   
  where $b_k$ is the degree $k$ Birkhoff polynomial of $f$.   The manifolds $\{ y=B(x) \}$, $\{ F=0 \}$ and $\{y=b_k(x) \}$ are therefore tangent at order $k$ at the origin. Hence the degree $k$ Taylor polynomial of the function $B$ is the degree $k$ Birkhoff polynomial.
 \subsection{Non-degeneracy theorem}  
  \label{SS::thetheorem} 
 If $f=\sum_{I \in \NM^d } a_I(y)x^I$ is an $X$-convergent series then we may, by Whitney's theorem, extend $f$ to a $C^\infty$ function-germ $F$. By the implicit function theorem, the zero locus of $F$ is then the graph of a Birkhoff function-germ. So Birkhoff functions always exists. The aim of this paper is to show that the Birkhoff function can be chosen to be KMS non-degenerate: 

\begin{theorem} 
\label{T::thetheorem} Assume that the formal series:
$$f(x,y)=\sum_{I \in \NM^d } a_I(y)x^I $$
is $X$-convergent and that the matrix $\d_y f(0,0):=(\d_{y_j} f_i(0,0))$ is of maximal rank. Then
the series $f$ admits KMS non-degenerate Birkhoff function germs, i.e., there exists Whitney $C^\infty$ germs:
$$B:(\CM^d,0) \to (\CM^e,0) \text{ with } (\G_B \cap X_\infty,0)= (V(f),0). $$

\end{theorem}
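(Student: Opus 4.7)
The plan is to construct $B$ in three stages: first build a preliminary $C^\infty$ Birkhoff function $B_0$ by Whitney--Fefferman extension and the implicit function theorem, then identify the minimal linear subspace $\Lt \subset \CM^e$ that records all partial derivatives of the formal Birkhoff series at the origin, and finally arrange the image of $B$ to lie in $\Lt$ so that KMS non-degeneracy follows from continuity of derivatives.

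For the first stage, apply the Whitney--Fefferman extension theorem to each coefficient $a_I(y)$ to obtain $C^\infty$ extensions $\tilde a_I$ on a common neighborhood of $0 \in \CM^e$ with controlled Whitney norms. Using these bounds and the $X$-convergence hypothesis, the series $F(x,y) := \sum_I \tilde a_I(y) x^I$ converges in the $C^\infty$ topology on a neighborhood of the origin and satisfies $F|_{X_\infty} = f$. Since $\partial_y F(0,0) = \partial_y f(0,0)$ is of maximal rank, the $C^\infty$ implicit function theorem yields a germ $B_0 : (\CM^d,0) \to (\CM^e,0)$ with $F(x,B_0(x)) \equiv 0$, so that $(\G_{B_0} \cap X_\infty,0) = (V(f),0)$. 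Because the Taylor expansion of $F$ at the origin coincides with $f$ as a formal series, the Taylor series of $B_0$ at $0$ is the unique formal Birkhoff series $b(x) \in \CM[[x]]^e$, independent of the extension chosen.

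For the second stage, let $V \subset \CM^e$ be the $\CM$-linear span of the set $\{\partial^\alpha b(0) : |\alpha| \geq 1\}$ and set $\Lt := V$ (a linear subspace since $b(0)=0$). Finite-dimensionality yields a minimal $l$ such that $\{\partial^\alpha b(0) : 1 \leq |\alpha| \leq l\}$ already spans $V$; this integer will witness KMS non-degeneracy. The third stage rests on the structural claim that the germ $V(f)$ at $(0,0)$ lies in $\CM^d \times \Lt$. Granting this, one replaces $B_0$ by $B := \pi_\Lt \circ B_0$, where $\pi_\Lt : \CM^e \to \Lt$ is a linear projection. Then $B$ coincides with $B_0$ on $\{x : (x,B_0(x)) \in X_\infty\}$ because $B_0$ already takes values in $\Lt$ there, so the Birkhoff condition is preserved; meanwhile $B(U) \subset \Lt$ on a small neighborhood, and consequently all partial derivatives of $B$ take values in $V$. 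The continuous map $x \mapsto (\partial^\alpha B(x))_{1 \leq |\alpha| \leq l}$, valued in a finite product of copies of $V$, has rank $\dim V$ at the origin by the choice of $l$, and lower semicontinuity of matrix rank gives full rank on a neighborhood of $0$, establishing KMS non-degeneracy of $B$ in $\Lt$.

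The main obstacle is the structural inclusion $V(f) \subset \CM^d \times \Lt$ near the origin. On the interior $X_\infty^\circ$, where the defining series for $f$ genuinely converges, the implicit function is honestly holomorphic with Whitney Taylor series $b$ at the boundary point $0$; the challenge is to combine this holomorphicity on the interior with the $X$-convergence estimates and the algebraic structure of $f$ to rule out any flat deviation of the implicit function away from $\Lt$. Once this inclusion is in hand, the projection $\pi_\Lt$ and the rank-semicontinuity argument close the proof cleanly.
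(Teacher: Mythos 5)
Your overall architecture (Whitney--Fefferman extension, implicit function theorem, identification of the Taylor series of $B_0$ with the Birkhoff series $b$, then forcing the image into the span $\Lt$ of the derivatives of $b$) reproduces the easy bookends of the paper's argument, but the step you yourself label ``the main obstacle'' --- the inclusion of the germ $(V(f),0)$ in $\CM^d\times\Lt$ --- is exactly the mathematical content of the theorem, and you do not prove it. The route you hint at (holomorphicity of the implicit function on the interior of $X_\infty$ combined with the $X$-convergence estimates) is not available in general: the paper's setup explicitly allows $X_\infty$ to have empty interior, and in the motivating examples the origin is in any case not an interior point, so there is no interior holomorphic branch whose flat deviation you can control; this is precisely why arguing from $b$ or from a single extension is hard. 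The paper obtains the inclusion by a finite-dimensional mechanism that is absent from your proposal: it works with the polynomial truncations $f_n$ and their zero sets $V_n\subset X_n$ (honest analytic correspondences on sets with nonempty interior), proves the finite-determinacy lemma that an $n$-correspondence has torsion index at most $n$ (an affine form $\ell(y)$ vanishing on the relative $n$-jet span, composed along lines $t\mapsto \a+tv$ in the $x$-direction, is a polynomial of degree at most $n$ in $t$ with a zero of multiplicity at least $n+1$, hence vanishes identically), deduces that the relative spans $\la V_n,0\ra$ increase and stabilise to an affine space $W_\infty$ with $q(W_\infty)=\Lt$, and finally uses the Hurwitz lemma applied to the holomorphic maps $x\mapsto f_n(x,y_0)$ to show that a point near the origin outside $W_\infty$ cannot lie in $V_\infty=V(f)$. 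Without an argument of this kind your proof is incomplete: a Whitney extension could a priori yield an implicit function with a flat deviation off $\Lt$ that still meets $X_\infty$.

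A secondary issue: even granting the structural inclusion, setting $B:=\pi_\Lt\circ B_0$ only secures one half of the required germ equality $(\G_B\cap X_\infty,0)=(V(f),0)$. A point $x$ with $(x,B_0(x))\notin X_\infty$ but $(x,\pi_\Lt B_0(x))\in X_\infty$ lies on $\G_B\cap X_\infty$ with no reason to satisfy $f(x,B(x))=0$, so the forward inclusion can fail. The paper avoids this by restricting $f$ to $W_\infty$ \emph{before} extending and applying the implicit function theorem: the resulting function takes values in the span by construction, and at any point of its graph lying in $X_\infty$ the extension agrees with $f$ itself, so no spurious zeros are created. You should adopt that order of operations (restrict, then extend) rather than projecting after the fact.
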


It is useful to work over the field of complex numbers but of course our constructions commute with complex conjugation, in particular if the $a_I$ are real analytic and $X$ invariant under complex conjugation then the function $B$ can be chosen to be real, i.e.:
$$B(\bar x,\bar y)=\overline{B(x,y)} $$
Our construction is going to be explicit and not just an existence theorem.
 

\section{Maps, non-degeneracy and the Kleinbock lemma}
\subsection{The span}
For a subset $X \subset  \CM^e$, we denote by $[ X ]$ the {\em span} of $X$, defined as the smallest affine subspace  containing $X$ .

Clearly, if $X \subset Y$, then $[ X ] \subset [ Y ]$. 
For a {\em germ} $(X,p)$ of a set we define its span as
\[ [ X,p ]:=\bigcap_{U \in \Vt(p)} [ U \cap X ] ,\]
where $U$ runs over the set $\Vt(p)$ of open neighbourhoods of the point $p$.
We always can find a sufficiently small representative $U$ for the germ $(X,p)$,
so that  $[ X,p ]=[ U ]$.

For an open set $U \subset \CM^d$ and map $g:U \lra \CM^e$ we also define its
span as $[ g(U) ]$, and we similarly define the span of a map germ
$g:(\CM^d,\a) \to (\CM^e,\b)$ at $\a$ as the affine space
\[ [ g,\a ] :=\cap_U [ g(U),\b ] ,\]
where the intersection runs over all representatives of the germ $g$.

\begin{lemma}
\label{L::span} Let $g: U \lra \CM^e$ be an analytic map and $U(\a)$ the
  connected component of $U$ containing $\a$. Then $g(U(\alpha))$ is
  contained in the span of the germ of $g$ at $\a$:
\[ g(U(\a)) \subset [ g, \a ].\]
Consequently,
\[[ g(U(\a)) ]=[ g, \a ] .\]
\end{lemma}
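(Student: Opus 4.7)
The plan is to reduce to the identity theorem for analytic functions, once we observe that the span of the germ stabilises.

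First I would note that the family $\{[g(V)]\}_{V \in \Vt(\a)}$, where $V$ ranges over open neighbourhoods of $\a$ contained in $U(\a)$, is a decreasing family of affine subspaces of $\CM^e$ as $V$ shrinks. Since the dimension of such subspaces is bounded by $e$, this descending chain must stabilise: there exists a small open neighbourhood $V_0 \subset U(\a)$ of $\a$ such that $[g(V_0)] = [g,\a]$, and in fact $[g(V)] = [g,\a]$ for every $V \subset V_0$.

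Next I would pick finitely many affine-linear functions $\lambda_1,\dots,\lambda_k$ on $\CM^e$ whose common zero locus is precisely the affine subspace $[g,\a]$. By construction, each composition $\lambda_j \circ g : U \lra \CM$ is analytic and vanishes identically on the open set $V_0$, because $g(V_0) \subset [g(V_0)] = [g,\a]$. The identity theorem applied to the analytic function $\lambda_j \circ g$ on the connected open set $U(\a)$ then forces $\lambda_j \circ g \equiv 0$ on all of $U(\a)$. Since this holds for each $j$, we conclude that $g(U(\a))$ lies in the common zero set $[g,\a]$, which is the first assertion.

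For the consequence, the inclusion $[g(U(\a))] \subset [g,\a]$ is immediate from what we just proved, while the reverse inclusion follows because $V_0 \subset U(\a)$ gives $g(V_0) \subset g(U(\a))$, hence $[g,\a] = [g(V_0)] \subset [g(U(\a))]$. I expect the only genuinely non-obvious point to be the stabilisation of the chain $[g(V)]$ — everything else is a direct application of the identity theorem on the connected component $U(\a)$.
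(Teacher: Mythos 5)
Your proof is correct and follows essentially the same route the paper takes: the stabilisation of the spans $[g(V)]$ is exactly the remark made just before the lemma (``we always can find a sufficiently small representative $U$ so that $[X,p]=[U]$''), and the main step --- affine-linear forms vanishing on $[g,\a]$, composed with $g$, killed on all of $U(\a)$ by the identity theorem --- is the same argument the authors spell out for the relative analog ($V(\o)\subset \la V,\o\ra$) and inside the Kleinbock lemma. No gaps; you may stop here.
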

\subsection{The jet-span of a map}
Given a multi-index $I \in \NM^d$, we denote by
\[ \d^Ig(\a):=\frac{\partial^I g}{\partial x^I}(\a) \in \CM^e \]
the vector of $I$-th derivative of $g$ at $\a$. We will also use the notation
$$| I|=i_1+\dots+i_d,\ I=(i_1,\dots,i_d). $$

We define the $m$-jet span of a Whitney $C^\infty$ map 
$$g:X \to \CM^e$$
at a point $\a \in X$ as the affine space:
\[ [ g,\a] _m:=[\{ \d^I g(\a): I \in \NM^d,\ |I| \leq m\} ]\]

The $m$-jet spans  form a increasing chain, so become
stable, at some minimal index $\tau=\tau(g,\a)$:
\[ [ g,\a ]_{\tau}=[ g, \a ]_{\tau+1}=\ldots \]
We call  $\tau(g,\a)$ the {\em torsion index of the map $g$ at the point $\a$}. 
The condition $ [ g,\a ]_{\tau}=[ g,\a ]$ corresponds to the KMS non-degeneracy condition.  

As an illustration, consider the germ
$$g:(\CM,0) \to (\CM^4,0):x \mapsto (1+x,x^3,x^{100},x+x^3+x^{100})=(y_1,\dots,y_4) $$
We have
\[[g,0]_0 \subset [ g,0]_1 =[ g,0]_2 \subset [ g,0]_3= [ g,0]_4= \ldots =[ g,0]_{99}\subset [ g,0]_{100}=[ g,0]_{101}=\ldots.\]
The span $[ g,0 ]$ is the affine hyperplane $[ g,0]_{100}=\{ y_4=y_1+y_2+y_3+1 \}$ and the map-germ has torsion index $\tau(g,0)=100 $ at the origin. It is KMS non-degenerate, as any holomorphic map.

A flat $C^\infty$ map-germ like
$$ g:(\CM,0) \to (\CM,0),\ x \mapsto \left\{ \begin{matrix} e^{-1/x^2} \text{ if Re } x> 0 \\
0 \text{ otherwise} \end{matrix} \right.$$
has torsion index equal to zero at the origin but it is KMS degenerate. Indeed,
  the span of this map germ is $[ g,0] =\CM$, but $[ g,0]_n=\{ 0 \}$ for any $n$ therefore:
  $$[g,0]_n \neq [ g,0 ]$$
   
\subsection{A lemma of Kleinbock}
\label{SS::Kleinbock}
The following observation due to Kleinbock was formulated originally in
the real analytic setting:

 \begin{lemma}[\cite{Kleinbock}]\label{L::Kleinbock} If $g:(\CM^d,\a) \lra (\CM^e,\b)$ is a holomorphic map germ, then it  is KMS non degenerate.
\end{lemma}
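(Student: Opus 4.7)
The plan is to exploit the convergent Taylor expansion of $g$ at $\alpha$ together with Lemma~\ref{L::span} to squeeze $[g,\alpha]$ between two spans built from the derivatives. Write $L:=[g,\alpha]$ and let $V\subset\CM^e$ be the translation subspace with $L=\beta+V$. First I would observe that for every multi-index $I$ with $|I|\geq 1$, the derivative $\d^I g(\alpha)$ lies in $V$: indeed it is a limit of iterated difference quotients of values of $g$ taken near $\alpha$, each such quotient lives in $V$ (because differences of points of $L$ lie in $V$), and $V$ is closed. Thus every jet-span $[g,\alpha]_m$ is contained in $L$, and its associated vector subspace
\[ W_m:=\mathrm{span}\{\d^I g(\alpha) : 1\leq |I|\leq m\}\subset V\]
gives an increasing chain inside the finite-dimensional space $V$.

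The chain $W_0\subset W_1\subset \cdots\subset V$ must stabilize at some finite index $\tau$, since $V$ has finite dimension. Set $W:=W_\tau=\bigcup_m W_m$; I must prove $W=V$, as this is equivalent to $[g,\alpha]_\tau=L=[g,\alpha]$.

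Here is where holomorphicity enters. Choose a connected polydisc $U\ni\alpha$ on which $g$ is represented by its convergent Taylor series:
\[ g(x)=\beta+\sum_{|I|\geq 1}\frac{\d^I g(\alpha)}{I!}(x-\alpha)^I.\]
For each $I$ the vector $\frac{\d^I g(\alpha)}{I!}(x-\alpha)^I$ is a scalar multiple of $\d^I g(\alpha)$, hence belongs to $W$. All partial sums therefore lie in $\beta+W$, and since $W$ is a linear subspace of $\CM^e$ and is automatically closed, the limit satisfies $g(x)\in\beta+W$ for every $x\in U$. Thus $g(U)\subset \beta+W$, and so $[g(U)]\subset \beta+W$.

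Finally Lemma~\ref{L::span}, applied to the connected set $U$, gives $[g(U)]=[g,\alpha]=L$, so $L\subset\beta+W$ and consequently $V\subset W$. Combined with the inclusion $W\subset V$ established in the first step, this yields $W=V$, i.e.\ $[g,\alpha]_\tau=[g,\alpha]$, which is the KMS non-degeneracy condition. The only non-formal ingredient in the whole argument is the convergence of the Taylor series, which is exactly the property that fails for the flat $C^\infty$ example $e^{-1/x^2}$ and explains why the lemma is special to the holomorphic (or real-analytic) setting; I do not foresee any other obstacle.
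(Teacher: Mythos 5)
Your proof is correct and takes essentially the same route as the paper: both arguments rest on the stabilization of the jet-spans at the torsion index, on Lemma~\ref{L::span}, and on the fact that $g$ equals its convergent Taylor series near $\alpha$, so that $g(U)$ is forced into the (closed, finite-dimensional) affine span $\beta+W$ of the derivatives. The only cosmetic differences are that the paper phrases the key step dually — an affine form $\ell$ vanishing on $[g,\alpha]_\tau$ has $\ell\circ g$ with identically vanishing Taylor series, hence $\ell\circ g\equiv 0$ by the identity theorem — whereas you sum the series directly inside $\beta+W$, and that you justify via difference quotients the easy inclusion the paper dismisses as ``clearly'' (the paper also normalizes $\beta=0$, which you avoid).
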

\begin{proof}
For simplicity assume that $\b=0$ so that the span is a vector space.
Let $g:U \lra \CM^e$ be a connected representative of the germ $(g,\a)$
The sequence of spans $[ g,\a ]_n$ generated by the vectors $\d^Ig(\a)$, $|I| \leq n$ stabilises at level $\tau$. Clearly $[ g,\a ]_{\tau} \subset [ g(U) ] $. To show the other inclusion, let $\ell$ be an affine linear form vanishing on $[ g,\a ]_{\tau}$. Then the Taylor series of $\ell \circ g$ at
$\a$ vanishes identically and therefore the holomorphic function $\ell \circ g$ vanishes on all of $U$. As this holds for all $\ell$ defining $[ g, \a ]_{\tau}$, we find $[ g(U)] \subset [ g, \a ]_{\tau}$, hence we have equality.
\end{proof}

As we saw previously, in the $C^\infty$ case, we may very well have a proper inclusion:
$$ \bigcup_k [ g,\a ]_k \subset [g,\a] \text{ but } \bigcup_k [ g,\a ]_k \neq [g,\a]. $$

\section{Correspondences and non-degeneracy}
\subsection{Maps and correspondences}
Many naturally appearing expressions like $x^2+y^2=1$ define $y$ only
as a multivalued function $y=\sqrt{1-x^2}$. Especially in the complex
domain, insisting on working with a monovalent function is cumbersome and
not very enlightening. Therefore, in algebraic geometry one is used to
working with {\em correspondences}, which provide a convenient extension of
the usual strict concept of function. The Poincar\'e example shows that this point of view is suited to the type of objects we are considering.  So our purpose will be to adapt the previous notions defined for maps to correspondences.


In this paper we will use the following terminology:

\begin{definition}
  A complex analytic variety $V \subset \CM^N=\CM^d \times \CM^e$ is said to
  define a {\em regular correspondence} at $\o=(\a,\b)$, if the following
  conditions
  are verified:\\
  1) the set $V$ is smooth of dimension $d$ at $\o$.\\
  2) Under the projection onto the first factor, the tangent space $T_{\o}V$
 maps isomorphically onto $\CM^d$.
\end{definition}

Of course, by the implicit function theorem, we then can find a local
parametrisation $g:(\CM^d,\a) \lra (\CM^e,\b), x \mapsto y=g(x)$
so that $(x, g(x)) \in V$, or $(\Gamma_g,\o) =(V,\o)$, where $\Gamma_g$
denotes the graph of $g$.

\subsection{The relative span}
For a subset $X \subset \CM^N:=\CM^d \times \CM^e$, we denote by $\la X\ra$ the {\em relative span} of $X$, defined as the smallest affine subspace of the form $\CM^d \times F$ containing $X$ where $F \subset \CM^e$ is an affine subspace.
Clearly, one has
\[ \la X \ra= q^{-1}(q([X])) ,\]
where $q:\CM^d \times \CM^e \lra \CM^e$ is the projection on the second factor.

Of course, the notions of span of a set and of a map germ are related:
\begin{lemma}
Let $\Gamma_g \subset \CM^d \times \CM^e=\CM^N$ denote the graph of
$g:U \lra \CM^e$ and let 
$q:\CM^N \to \CM^e$ be the projection onto the second factor. Then

\[ [ g(U)] =q(\la \Gamma_g\ra)\]  
\end{lemma}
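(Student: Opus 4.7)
The plan is to chain together the identity $\la X\ra = q^{-1}(q([X]))$ (stated just before the lemma) with two easy containments of affine spans obtained by pushing/pulling along the affine surjection $q$. Specifically, I first reduce the identity $[g(U)] = q(\la \G_g\ra)$ to the identity $[g(U)] = q([\G_g])$, and then prove the latter by double inclusion.

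For the reduction, since $\la \G_g\ra = q^{-1}(q([\G_g]))$ and $q: \CM^N \to \CM^e$ is surjective, applying $q$ gives
\[ q(\la \G_g\ra) = q(q^{-1}(q([\G_g]))) = q([\G_g]). \]
So it suffices to show $[g(U)] = q([\G_g])$.

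For the forward inclusion $[g(U)] \subset q([\G_g])$, I would use that $q$ is an affine map; hence $q([\G_g])$ is an affine subspace of $\CM^e$. Since $g(U) = q(\G_g) \subset q([\G_g])$, the minimality of the span gives $[g(U)] \subset q([\G_g])$. For the reverse inclusion, I observe that $\CM^d \times [g(U)]$ is an affine subspace of $\CM^N$ containing every point $(x, g(x))$ of $\G_g$, so it contains $[\G_g]$. Applying $q$ yields $q([\G_g]) \subset q(\CM^d \times [g(U)]) = [g(U)]$.

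There is really no main obstacle here: the statement is an elementary consequence of the definitions plus the functoriality of affine spans along the affine projection $q$, and the only content is bookkeeping of which direction of inclusion uses affineness of $q$ versus the product structure $\CM^d \times F$ built into the definition of the relative span.
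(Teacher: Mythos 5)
Your argument is correct. It differs in mechanism from the paper's proof, though both are elementary: the paper argues \emph{dually}, setting up the bijection between affine functions $\ell$ on $\CM^e$ vanishing on $[g(U)]$ and affine functions $\ell'(x,y)=\ell(y)$ on $\CM^N$ vanishing on $\G_g$, and reading the equality $[g(U)]=q(\la\G_g\ra)$ off from the fact that both spans are cut out by such vanishing affine forms; you argue \emph{primally}, first invoking the displayed identity $\la X\ra=q^{-1}(q([X]))$ together with surjectivity of $q$ to reduce to $[g(U)]=q([\G_g])$, and then proving that by double inclusion (affineness of $q$ plus minimality of $[g(U)]$ in one direction, the containment $\G_g\subset\CM^d\times[g(U)]$ plus minimality of $[\G_g]$ in the other). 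Your route is somewhat longer but more explicit, and it makes visible exactly where each hypothesis (affineness of the projection, the product form $\CM^d\times F$ in the definition of the relative span) enters; the paper's route is shorter but leaves implicit the dual description of an affine span as the common zero set of the affine functions vanishing on it. Either proof is complete and acceptable.
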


\begin{proof} Let $\ell:  \CM^e \to \CM$ be an affine function vanishing on
$[ g(U)]$ then
$$ \ell':  \CM^d \times \CM^e \to \CM,\ (x,y) \mapsto \ell(y)$$
is an affine function vanishing on $\G_g$ and conversely.
 \end{proof}

We define the relative span at a point $\o \in X$
as the affine space
\[ \la X,\o \ra  =\cap_U \la X \cap U,\o \ra ,\]
where the intersection runs over all neighbourhoods of $\o \in \CM^N$ and get an analog of Lemma~\ref{L::span}:

\begin{lemma} Let $V \subset \CM^N$ be an analytic set and $V(\o)$ the
connected component of $V$ that contains the point $\o \in V$. Then $V(\o)$
is contained in the span of the germ $(V,\o)$:
\[ V(\o) \subset \la V, \o\ra\]
and consequently:
\[\la V(\o)\ra=\la V, \o\ra .\]
\end{lemma}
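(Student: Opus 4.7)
The plan is to mirror the proof of Lemma~\ref{L::span}, replacing the ordinary span by the relative span. Since $\la V,\o\ra$ is of the form $\CM^d\times F$, it is cut out by the affine functions $\ell\circ q$ with $\ell:\CM^e\to\CM$ affine that vanish on $V\cap U$ for some neighbourhood $U$ of $\o$; accordingly I fix such an $\ell$ and aim to show that $\ell\circ q$ vanishes on all of $V(\o)$, whence $V(\o)\subset\la V,\o\ra$ by intersecting over all such $\ell$.

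As an analytic set is locally connected, its connected component $V(\o)$ is open in $V$, so $V\cap U\subset V(\o)$ once $U$ is sufficiently small. The restriction of $\ell\circ q$ to $V(\o)$ is then a holomorphic function vanishing on the non-empty open subset $V\cap U$; the key step is the identity principle for holomorphic functions on a connected complex analytic set, which forces $\ell\circ q\equiv 0$ on $V(\o)$. This parallels the argument of Lemma~\ref{L::span} exactly, with the connected open set $U(\a)\subset\CM^d$ there replaced by the connected analytic set $V(\o)$ here.

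The consequence $\la V(\o)\ra=\la V,\o\ra$ is then formal. The inclusion $\la V(\o)\ra\subset\la V,\o\ra$ follows from $V(\o)\subset\la V,\o\ra$ by the minimality that defines the span, and the reverse follows since $V\cap U\subset V(\o)$ for small $U$ implies $\la V\cap U\ra\subset\la V(\o)\ra$, so that intersecting over all such $U$ yields $\la V,\o\ra\subset\la V(\o)\ra$.

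The main obstacle is the identity principle itself. When $V$ is smooth at $\o$ — the regular-correspondence setting in which the lemma is applied in the paper — $V(\o)$ is a connected complex manifold and the principle is classical, since the set of points at which $\ell\circ q$ vanishes to infinite order is both open and closed. For a general reduced analytic $V$ one has to be slightly more careful, because a holomorphic function may vanish on one irreducible component of $V$ without vanishing on another; in that generality one applies the argument to the irreducible components of the germ $(V,\o)$, which are in any case all that enter the definition of $\la V,\o\ra$.
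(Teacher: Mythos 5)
Your argument is essentially the paper's own proof: an affine function $\ell$ depending only on the $y$-variables and vanishing on $\la V,\o\ra$ vanishes on a neighbourhood of $\o$ in $V$, and the identity theorem propagates this to the connected component $V(\o)$, after which $\la V(\o)\ra=\la V,\o\ra$ follows formally exactly as you say. Your closing caveat is, if anything, more careful than the paper, which invokes the identity theorem without comment; note only that passing to the irreducible components of the germ $(V,\o)$ shows that $\ell$ vanishes on those components of $V$ that actually pass through $\o$, so like the paper's one-line appeal it settles the case where every component of $V(\o)$ contains $\o$ (in particular the smooth, regular-correspondence situation in which the lemma is used), whereas a connected reducible $V$ having a component joined to $\o$ only through a chain of other components is not literally covered by either argument.
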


\begin{proof}
Let $\ell$ be an affine function vanishing on $\la V, \o\ra$. As a holomorphic
function vanishing on the germ $(V,\o)$, it vanishes on a neighbourhood of $V$
containing $\o$. By the identity theorem of analytic functions, $\ell$ vanishes
on the connected component $V(\o)$.
\end{proof}
\subsection{The relative jet-span of an analytic set}
For a germ of an analytic set $(V, \o)$ we define the $n$-jet span 
\[ \la V, \o \ra_n \]
 as the smallest affine subspace of the form $\CM^d \times F$ that contains the $n$-th infinitesimal
neighbourhood of the fibre at $\o \in V$ of the projection $\CM^N \to \CM^d$. It is alternatively defined by the kernel of the affine linear functions contained in the ideal $I+M_{\o,x}^{n+1}$ which depend only on the $y$-variables. Here $y_1,\dots,y_e$ denote coordinates on $\CM^e$, $I \subset \Ot_{\CM^N,p}$
is the ideal defining $(V,\o)$ and $M_{\o,x} \subset \Ot_{\CM^N,\o}$ the maximal
ideal of holomorphic functions vanishing at $x=\a$, $\o=(\a,\b)$. 

Clearly, one has\\
\[ \la V, \o \ra_1  \subset \la V, \o \ra_2 \subset  \la V, \o \ra_3 \subset \ldots, \]
so this chain stabilises for a certain index $\tau:=\tau(V,p)$, that we call the  {\em torsion index} of $V$ at $p$:
\[ \la V, \o \ra_{\tau}  = \la V, \o \ra_{\tau+1}=\ldots\]
 
\begin{lemma} For any holomorphic map-germ $g:(\CM^d,\a) \to (\CM^e,\b)$ we have
$$q(\la \G_g,\omega \ra_m)=[g,\a]_m $$
for $\o=(\a,\b)$ and any $m \in \NM$.
\end{lemma}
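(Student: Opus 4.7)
The plan is to reduce both sides of the equality to the same collection of affine-linear conditions on $\CM^e$, by using the alternative characterization of the relative jet-span in terms of the ideal $I + M_{\o,x}^{m+1}$.

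First I unravel what $q(\la \G_g,\o\ra_m)$ is. Since $\la \G_g,\o\ra_m$ is by definition of the form $\CM^d \times F_m$ for a (minimal) affine subspace $F_m \subset \CM^e$, we have $q(\la \G_g,\o\ra_m)=F_m$. Using the alternative definition, $F_m$ is cut out in $\CM^e$ by all affine-linear functions $\ell(y)=c+L(y)$ (depending only on $y$) that belong to $I+M_{\o,x}^{m+1}$, where $I \subset \Ot_{\CM^N,\o}$ is the ideal of $\G_g$, namely $I=(y_1-g_1(x),\ldots,y_e-g_e(x))$.

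Next I translate membership in $I+M_{\o,x}^{m+1}$ into a Taylor-vanishing condition. Since $\ell(y)=L(y-g(x))+\ell(g(x))$ and $L(y-g(x))\in I$, an easy check shows $\ell \in I+M_{\o,x}^{m+1}$ if and only if $\ell(g(x)) \in M_\a^{m+1}$ in $\CM\{x\}$. Now Taylor-expand at $\a$: since $\ell$ is affine with linear part $L$,
\[ \ell(g(x)) = \ell(\b) + \sum_{|I|\geq 1}\frac{(x-\a)^I}{I!}\,L\!\bigl(\d^I g(\a)\bigr).\]
So the vanishing of $\ell(g(x))$ up to order $m$ is equivalent to the two conditions $\ell(\b)=0$ and $L(\d^I g(\a))=0$ for all $1\leq |I|\leq m$.

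The last step is to observe that these are exactly the conditions for $\ell$ to vanish on the affine span $[g,\a]_m = \b + \mathrm{span}_\CM\{\d^I g(\a): 1\leq |I|\leq m\}$: the point condition $\ell(\b)=0$ corresponds to the base point $\d^0 g(\a)=\b$, and the linear conditions $L(\d^I g(\a))=0$ correspond to the higher derivative vectors. Hence $F_m$ and $[g,\a]_m$ are affine subspaces of $\CM^e$ with identical annihilator in the space of affine functions, so they coincide, yielding $q(\la \G_g,\o\ra_m)=[g,\a]_m$.

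I do not foresee any real obstacle: the argument is a bookkeeping exercise matching the algebraic definition of $\la \cdot,\o\ra_m$ with the geometric definition of $[\cdot,\a]_m$. The only point requiring some care is the convention that $\d^0 g(\a)=\b$ is treated as a point while the higher $\d^I g(\a)$ play the role of direction vectors based at $\b$; once this is made explicit, the equivalence of conditions is immediate from the Taylor expansion.
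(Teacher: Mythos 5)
Your proof is correct and follows essentially the same route as the paper: both describe $\la \G_g,\o\ra_m$ by affine functions $\ell$ depending only on the $y$-variables and convert membership in $I+M_{\o,x}^{m+1}$ into vanishing of the Taylor expansion of $\ell\circ g$ at $\a$ up to order $m$, the paper merely stating this Taylor step more tersely while you spell out the ideal-theoretic reduction and the matching of annihilators. Your reading of $[g,\a]_m$ as $\b+\mathrm{span}_{\CM}\{\d^Ig(\a):1\leq |I|\leq m\}$ is the one consistent with the paper's torsion-index example and with the KMS condition, so the convention point you flag is resolved correctly.
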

\begin{proof}
Write $\la \G_g,\a\ra_m=\{ \ell=0 \} $ where 
$$\ell:\CM^d \times \CM^e \to \CM^r,\ (x,y) \mapsto \ell(y)$$ is some affine map depending only on the $y$-variables. By the Taylor formula, we have
$$\ell(g(x))=\sum_{ |I| \leq m} \frac{\d^Ig(\a)}{|I|!}(x-\a)^I+o(\| x \|^m) $$
and therefore the conditions $\ell(g(x))=o(\| x \|^m) $ and $\d^Ig(\a)=0$ for $|I| \leq m$ are equivalent.
\end{proof}


\subsection{A variant of the Kleinbock lemma}

The following result is the analog of the Kleinbock lemma for analytic spaces:

\begin{lemma} 
\label{L::finite} Let $(V,\o) \subset (\CM^N,0)$ be a germ of an analytic space with torsion index $\tau$ at $\o$ then:
\[ \la V,\o \ra = \la V,\o\ra_\tau\]
\end{lemma}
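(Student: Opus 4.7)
The plan is to rephrase both inclusions as statements about which affine-linear forms $\ell(y)$ lie in $I + M_{\o,x}^{n+1}$, where $I \subset \Ot_{\CM^N,\o}$ is the defining ideal of the germ $(V,\o)$, and then to reduce the nontrivial direction to Krull's intersection theorem.

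The easy inclusion $\la V,\o\ra_\tau \subset \la V,\o\ra$ is immediate: if an affine-linear $\ell(y)$ vanishes on $\la V,\o\ra$, then it vanishes on the germ $(V,\o)$, so $\ell \in I \subset I + M_{\o,x}^{\tau+1}$, and hence $\ell$ vanishes on $\la V,\o\ra_\tau$.

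For the reverse inclusion, take any affine-linear form $\ell(y)$ vanishing on $\la V,\o\ra_\tau$. The stabilisation of the chain at $\tau$ gives $\la V,\o\ra_n = \la V,\o\ra_\tau$ for every $n \geq \tau$, so $\ell \in I + M_{\o,x}^{n+1}$ for all such $n$, i.e.\ $\ell$ lies in $\bigcap_n (I + M_{\o,x}^{n+1})$. The main step is to show this intersection equals $I$. Passing to the Noetherian local ring $\Ot_{V,\o} = \Ot_{\CM^N,\o}/I$ with maximal ideal $\mathfrak{m}_{V,\o}$, the claim becomes $\bigcap_n M_{\o,x}^{n+1} \Ot_{V,\o} = 0$. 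The image of $M_{\o,x}$ in $\Ot_{V,\o}$ sits inside $\mathfrak{m}_{V,\o}$, so $M_{\o,x}^{n+1}\Ot_{V,\o} \subset \mathfrak{m}_{V,\o}^{n+1}$, and the vanishing follows at once from Krull's intersection theorem $\bigcap_n \mathfrak{m}_{V,\o}^n = 0$. Hence $\ell \in I$, so $\ell$ vanishes on $(V,\o)$ and therefore on $\la V,\o\ra$.

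The only point requiring care is that $M_{\o,x}$ itself is merely prime, not maximal, in $\Ot_{\CM^N,\o}$, so Krull's theorem cannot be applied directly upstairs; quotienting by $I$ is precisely what places us in the Noetherian local setting where it works. This argument is the ideal-theoretic analogue of the Taylor-series argument used in the proof of Kleinbock's lemma, adapted so as to cover a possibly singular or reducible analytic germ $(V,\o)$.
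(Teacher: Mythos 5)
Your proof is correct and follows essentially the same route as the paper: both reduce the nontrivial inclusion to showing that an affine function lying in $I + M_{\o,x}^{n+1}$ for every $n$ already lies in $I$, via a Krull-intersection argument. Your version merely spells out the detail the paper leaves implicit, namely that Krull's theorem is applied in the Noetherian local ring $\Ot_{\CM^N,\o}/I$ rather than to the non-maximal ideal $M_{\o,x}$ upstairs.
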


\begin{proof}
  Clearly, $\la V,\o\ra_\tau \subset \la V,\o\ra$. For the converse, note that we have
  $\cap_n M_p^n =(0)$, so that a function that vanishes on all infinitesimal
  neighbourhoods of $\o$ vanishes on $(V,\o)$, from which we obtain the reverse inclusion.
 \end{proof}

\subsection{$n$-Correspondences and the finite determinacy lemma}
We now consider truncated sums as we did in the introduction for the Poincar\'e example:
 \begin{definition} A correspondence $V$ is called an {\em $n$-correspondence at $\omega$},
if it is defined by $d$ equations of the form   
\[ f(x,y) =\sum_{|I| \le n,\ I \in \NM^d} a_I(y)x^I \in \CM\{ y\}[x]\]

 

\end{definition}
 
 The following lemma is a refinement of Lemma \ref{L::finite}:

 \begin{lemma}
  \label{L::stable}
  If $V$ is an $n$-correspondence at $\omega$, then 
  $$\tau(V,\omega) \leq n $$
  that is: $\la V,\omega \ra_n =\la V, \omega\ra$.
\end{lemma}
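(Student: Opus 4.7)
The plan is to show directly that any affine $y$-function $\ell$ lying in $I+M_{\omega,x}^{n+1}$ already lies in $I$, which is equivalent to the asserted $\langle V,\omega\rangle_n=\langle V,\omega\rangle$. Since $V$ is a regular correspondence at $\omega$, the implicit function theorem provides a local parametrisation $y=g(x)$ with $(\G_g,\omega)=(V,\omega)$; this identifies $\Ot_\omega/I$ with $\CM\{x-\alpha\}$ via $y_j\mapsto g_j(x)$, so $\ell\in I$ is equivalent to the formal identity $\ell(g(x))\equiv 0$. Writing $\ell=\sum_i h_i f_i+r$ with $r\in M_{\omega,x}^{n+1}$ and evaluating on $V$ gives $\ell(g(x))=r(x,g(x))$, which vanishes to order $n+1$ at $x=\alpha$; the task reduces to upgrading this finite-order vanishing to identical vanishing.

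The key is the polynomial-in-$x$ structure of the defining equations. Expanding $f_i(x,g(x))\equiv 0$ using $f_i=\sum_{|J|\le n}a_{i,J}(y)x^J$ and reading off the coefficient of $x^M$ at $x=\alpha$ gives, by Leibniz and Fa\`a di Bruno, a recursion
\[
\partial_y f(\omega)\cdot\partial^M g(\alpha) = -\sum_{1\le|J|\le n,\,J\le M}\tfrac{M!}{(M-J)!}\,\partial^{M-J}(a_{\cdot,J}\circ g)(\alpha) \;-\; R_M,
\]
where $R_M$ collects the Fa\`a di Bruno remainders coming from $J=0$ that do not involve $\partial^M g(\alpha)$. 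Crucially, the cutoff $|J|\le n$ forces every term on the right to depend only on $g$-derivatives of order strictly less than $|M|$, and since $\partial_y f(\omega)$ is invertible, one solves $\partial^M g(\alpha)$ as a polynomial function of its strictly lower-order predecessors, for every $|M|\ge 1$.

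Combining this recursion with the hypothesis $\ell(g(x))\in M_{\omega,x}^{n+1}$, I would prove by induction on $|M|$ that $\ell\bigl(\partial^M g(\alpha)\bigr)=0$: the base case $|M|\le n$ is exactly the hypothesis, and the inductive step substitutes the recursion. The main obstacle is that this recursion is polynomial, not linear, in the lower derivatives of $g$, so $\ell$ applied to the right-hand side does not obviously vanish term by term; one must exploit that the nonlinear products are assembled from vectors already in $\ker\ell$ via operations that preserve $\ker\ell$. A cleaner alternative I would pursue is purely algebraic: any part of $h_i$ of $x$-degree $>n$ contributes to $M_{\omega,x}^{n+1}$ and may be absorbed into $r$, so one may take $h_i\in\CM\{y\}[x]_{\le n}$; then matching coefficients of $x^M$ for $0\le|M|\le n$ in $\sum_i h_i f_i\equiv\ell\pmod{M_{\omega,x}^{n+1}}$ produces a system of identities in $\CM\{y\}$ which, combined with the invertibility of the Jacobian of $(a_{i,0})_i$ at $\beta$, should yield a representative of $\ell$ literally in $(f_1,\dots,f_e)\,\Ot_\omega$.
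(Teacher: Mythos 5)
You never close the step that you yourself flag as the main obstacle, and in the generality in which you have set it up it cannot be closed. Your reduction is: for equations $f_i=\sum_{|J|\le n}a_{i,J}(y)x^J$ with $\d_y f(\omega)$ invertible and local solution $y=g(x)$, if $\ell(g(x))$ vanishes to order $n+1$ at $\alpha$ then $\ell(g(x))\equiv 0$. Consider $d=e=2$, $n=1$, $\omega=0$, $f_1=y_1-x_1$, $f_2=y_2-x_1y_1$. Both equations have $x$-degree $1$, $\d_yf(0)=\mathrm{Id}$, and $\{f_1=f_2=0\}$ is exactly the graph of $g(x)=(x_1,x_1^2)$, a regular correspondence at the origin. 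The affine function $\ell=y_2$ satisfies $\ell=f_2+x_1f_1+x_1^2\in I+M_{\omega,x}^{2}$, so $\ell(g(x))=x_1^2=O(\|x\|^2)$, yet $\ell(g(x))\not\equiv 0$. Thus every hypothesis you actually use holds while your target conclusion fails: the induction necessarily breaks at $|M|=2$ (here $\d_{x_1}^2g(0)=(0,2)\notin\ker\ell$ although $g(0),\d_{x_1}g(0),\d_{x_2}g(0)\in\ker\ell$), the hoped-for ``operations preserving $\ker\ell$'' do not exist, and the ``cleaner alternative'' of matching $x$-coefficients cannot produce $\ell\in(f_1,\dots,f_e)\,\Ot_{\CM^N,\omega}$ either, since in this example $\ell\notin I$. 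A further symptom that the approach misses the essential mechanism: the property you single out as the key consequence of the cutoff $|J|\le n$ --- that the recursion expresses $\d^Mg(\alpha)$ through strictly lower-order derivatives of $g$ --- is true for every analytic $f$, truncated or not, so the degree bound never genuinely enters your argument.

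The paper's proof is of a different nature: it restricts the correspondence to lines $x=\alpha+tv$ and uses that, for an $n$-correspondence, the composition of $\ell$ with the correspondence along such a line is a polynomial of degree $\le n$ in $t$; a polynomial of degree $\le n$ with a zero of multiplicity $\ge n+1$ vanishes identically, and letting $v$ vary gives $\ell\equiv 0$ on $(V,\omega)$. The work is thus done by a degree-$\le n$ polynomiality statement along the correspondence, a structural input for which your derivative recursion has no counterpart; note that in the example above $\ell$ restricted to the correspondence along the line $x=(t,0)$ is $t^2$, so this polynomiality is precisely the point where more must be extracted from the notion of $n$-correspondence (or an additional hypothesis imposed) than the bare conditions your argument relies on. Without identifying and using that input, the plan cannot be completed.
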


  \begin{proof}
Consider an affine function $\ell$ vanishing on
  $\la V,\omega\ra_n$  depending only on the $y$-variables.
 Let $v \in \CM^d$ be an arbitrary vector and consider the function
  $$\g:(\CM,0) \to (\CM^e,0),\ t \mapsto f(\a+tv,y) $$
  By definition of the $n$-span,  the polynomial $\ell \circ \g$ in the variable $t$ has a zero of multiplicity $\geq n+1$ at the origin:
  $$(\ell \circ \g)^{(k)}(0)=0,\ k=0,1,2, \ldots,n.$$
  
  As $(V,\o)$ is an $n$-correspondence, the equation $\ell \circ  \g(t)=0$ has degree $\leq n$ therefore
    $\ell \circ \g $ vanishes identically. As the vector $v$ is arbitrary, we deduce that $\ell$ vanishes on $(V,\omega)$. 
  This inclusion being true for any hyperplane containing $\la V,\omega \ra_n$,
  we  get that $\la V,\o\ra \subset \la V,\omega \ra_n$.
  This proves the lemma.
  \end{proof}
\subsection{The spans stabilise}
We return to the situation of  subsection \ref{SS::thetheorem} and consider a decreasing chains of subsets of $\CM^N=\CM^d \times \CM^e$:
\[ X_0 \supset X_1 \supset X_2 \supset \ldots \supset X_\infty :=\cap_n X_n .\]
with
$\overline{X_n^{\circ}}=X_n$  
 and consider a formal power series
 \[  f(x,y) =\sum_{ I \in \NM^d} a_I(y)x^I \in \CM\{ y\} [[x]],\ f(0)=0 \]
 and define
 \[ f_n(x,y) =\sum_{|I| \le n,\ I \in \NM^d} a_I(y)x^I \in \CM\{y\}[x]\]
We denote their
zero-sets by
\[ V_n:=\{(x,y) \in X_n\;|\; f_n(x,y)=0\} \subset X_n\]

We also have to consider
\[ V_{\infty}:=\{(x,y) \in X_{\infty}\;|\; f(x,y)=0\} \subset X_{\infty}\]


\begin{lemma}One has  \[\la V_n,0\ra \subset \la V_{n+1},0\ra .\]
\end{lemma}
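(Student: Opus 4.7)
The plan is to apply the finite determinacy lemma (Lemma~\ref{L::stable}) to both $V_n$ and $V_{n+1}$ and reduce the inclusion to a short ideal-theoretic manipulation. Since $f_n$ has degree at most $n$ in the $x$-variables, the germ $(V_n,0)$ is an $n$-correspondence, and likewise $(V_{n+1},0)$ is an $(n+1)$-correspondence. Lemma~\ref{L::stable} then yields
$$\la V_n,0\ra \;=\; \la V_n,0\ra_n \qquad \text{and} \qquad \la V_{n+1},0\ra \;=\; \la V_{n+1},0\ra_{n+1}.$$
Consequently it suffices to establish the inclusion $\la V_n,0\ra_n \subset \la V_{n+1},0\ra_{n+1}$ at the level of jet-spans.

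By the ideal-theoretic description of the jet-span, this amounts to the reverse inclusion of their defining ``duals'': any affine function $\ell(y)$ of the $y$-variables lying in $I_{V_{n+1}} + M_{0,x}^{n+2}$ should automatically lie in $I_{V_n} + M_{0,x}^{n+1}$. To prove this I would decompose
$$f_{n+1}(x,y) \;=\; f_n(x,y) \;+\; g_{n+1}(x,y), \qquad g_{n+1}(x,y)=\sum_{|I|=n+1} a_I(y)\,x^I \in M_{0,x}^{n+1},$$
and then, starting from a representation $\ell(y) = h\,f_{n+1} + r$ with $r \in M_{0,x}^{n+2}$, rewrite
$$\ell(y) \;=\; h\,f_n \;+\; \bigl(h\,g_{n+1} + r\bigr).$$
The second summand belongs to $M_{0,x}^{n+1}$, because $h\,g_{n+1} \in M_{0,x}^{n+1}$ and $r \in M_{0,x}^{n+2} \subset M_{0,x}^{n+1}$. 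Hence $\ell \in (f_n) + M_{0,x}^{n+1} \subset I_{V_n} + M_{0,x}^{n+1}$, which is what was needed.

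I do not anticipate any substantial obstacle: the computation is essentially the triviality that $f_{n+1}$ and $f_n$ differ by a term of order $\ge n+1$ in $x$. The one minor point I would want to be explicit about is that $V_n$ is defined as $X_n \cap \{f_n=0\}$ rather than as the bare analytic set cut out by $f_n$, so one should justify that Lemma~\ref{L::stable} still applies. This is harmless because the jet-span $\la V_n,0\ra_n$ is determined by the formal/infinitesimal structure of the equation $f_n=0$ at the origin, and the product structure $X_n = \Ut\times\Vt_n$ together with the hypothesis $\overline{X_n^{\circ}}=X_n$ ensures that this infinitesimal structure is the same as for the ambient zero locus of $f_n$.
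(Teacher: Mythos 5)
Your proof is correct and follows essentially the same route as the paper: apply the finite determinacy lemma (Lemma~\ref{L::stable}) to $V_n$ and $V_{n+1}$ and use that $f_{n+1}-f_n$ has order $\ge n+1$ in $x$. The only difference is cosmetic — the paper passes through the intermediate equality $\la V_n,0\ra_n=\la V_{n+1},0\ra_n$ followed by the trivial inclusion $\la V_{n+1},0\ra_n\subset\la V_{n+1},0\ra_{n+1}$, whereas you merge these two steps into a single ideal-theoretic computation, and your closing remark about $V_n$ being cut out inside $X_n$ addresses a point the paper itself glosses over.
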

\begin{proof} We have
  $$\la V_n,0\ra =\la V_n,0\ra_n=\la V_{n+1},0\ra_{n} \subset \la V_{n+1},0\ra_{n+1}=\la V_{n+1},0\ra. $$
 For the first and last equalities we used that $V_n$ is an $n$-correspondence
  (for $n$ and $n+1$), the second equality is due to the fact that
  $$f_{n+1}(x,y)=f_n(x,y)+O(\|x\|^{n+1}) $$ The middle inclusion is
  by the definition of jet-span.
\end{proof}

As a consequence, the sequence $\la V_n,0\ra$ stabilises, say at level $\rho$
and we define
\[ W_{\infty} :=\la V_{n},0\ra, n \ge \rho .\]

\begin{lemma}
\label{L::limit}For $n \ge \rho$, we have:
\[ \la V_{\infty},0\ra \subset \la V_{n},0\ra= W_{\infty} \]
\end{lemma}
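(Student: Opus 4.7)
The plan is to adapt the chain-of-equalities argument of the preceding ``spans stabilise'' lemma, replacing $V_{n+1}$ by $V_\infty$. The key observation is that $V_\infty$ and $V_n$ share the same $n$-th jet at the origin, because their defining series differ by terms of $x$-order at least $n+1$: after applying the implicit function theorem (available since $\d_y f(0,0)$ has maximal rank), both $V_n$ and the germ of $V_\infty$ are locally parametrised by maps whose $n$-th Taylor polynomial at the origin is the same Birkhoff polynomial $b_n$.

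With this identification in hand, I would run the chain
\[
\la V_\infty, 0 \ra_n \;=\; \la V_n, 0 \ra_n \;=\; \la V_n, 0 \ra \;=\; W_\infty \qquad (n \geq \rho).
\]
The first equality rests on the matching of $n$-jets just described, since the $n$-jet span depends only on the $n$-th Taylor polynomial of the parametrisation. The second equality is Lemma \ref{L::stable}, applied to the $n$-correspondence $V_n$. The third equality is the stabilisation at level $\rho$ established in the preceding lemma. In particular, the $n$-jet span of $V_\infty$ at the origin already equals $W_\infty$ as soon as $n \geq \rho$.

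The main obstacle is the final passage from a stable $n$-jet span to the full relative span $\la V_\infty, 0 \ra$. For an analytic set this is precisely Lemma \ref{L::finite}, but $V_\infty$ is not analytic and lives inside $X_\infty$, which may have empty interior, so one must work slightly harder. I would close the gap using the Whitney-smooth Birkhoff function $B$ whose graph locally realises $V_\infty$: an affine linear form $\ell(y)$ that vanishes on $W_\infty$ gives a $\ell \circ B$ with vanishing Taylor series at the origin, and this, combined with the fact that $W_\infty$ is a closed affine subspace of the form $\CM^d \times F$ already containing every $V_n$ near $0$, is enough to place the germ $(V_\infty, 0)$ inside $W_\infty$, yielding the desired inclusion $\la V_\infty, 0 \ra \subset W_\infty$.
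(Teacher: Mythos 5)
Your chain $\la V_\infty,0\ra_n=\la V_n,0\ra_n=\la V_n,0\ra=W_\infty$ is fine as far as it goes (it is essentially the preceding lemma together with Lemma~\ref{L::stable}), but it is not where the difficulty of Lemma~\ref{L::limit} lies, and your treatment of the real difficulty --- passing from the stabilised jet spans to the inclusion $\la V_\infty,0\ra\subset W_\infty$ --- has a genuine gap. Your argument is that an affine form $\ell$ vanishing on $W_\infty$ yields $\ell\circ B$ with identically vanishing Taylor series at the origin. In the Whitney $C^\infty$ setting this says nothing about the germ: $\ell\circ B$ could be a nonzero flat function, which is precisely the phenomenon illustrated in the paper by the example $x\mapsto e^{-1/x^2}$ and by the possible strict inclusion $\bigcup_k[g,\a]_k\subsetneq[g,\a]$. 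Concluding from the vanishing Taylor series that $\ell$ vanishes on the germ $(V_\infty,0)$ amounts to assuming that $B$ is KMS non-degenerate, which is exactly what the theorem is trying to prove, so the step is circular. The supplementary remark that $W_\infty$ ``already contains every $V_n$ near $0$'' does not close the gap either: one needs a mechanism relating the zero set of the limit series $f$ to the zero sets of its truncations $f_n$, and that is what is missing from your proposal.

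The paper supplies this mechanism by a pointwise argument that never mentions Birkhoff functions. Take a point $(x_0,y_0)$ of $X_\infty$ outside $W_\infty$. Since $W_\infty$ is of the product form $\CM^d\times F$ and $V_n\subset\la V_n,0\ra=W_\infty$ for $n\ge\rho$, the whole slice through $y_0$ avoids $V_n$, so the polynomial functions $\phi_n: x\mapsto f_n(x,y_0)$ do not vanish; in particular some coefficient $a_I(y_0)$ is nonzero, so the limit $\phi_\infty: x\mapsto f(x,y_0)$ is a holomorphic function that is not identically zero. The Hurwitz theorem, applied to the genuinely holomorphic functions $\phi_n\to\phi_\infty$ of the variable $x$ alone (for this fixed $y_0$), then gives $f(x_0,y_0)\neq 0$, i.e.\ $(x_0,y_0)\notin V_\infty$. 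Hence $V_\infty\subset W_\infty$ near the origin, and the inclusion of relative spans follows. Some fixed-$y_0$ analyticity argument of this kind is indispensable; a Taylor-series argument concentrated at the single point $0$ cannot control the germ of $V_\infty$.
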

  \begin{proof}

    Take $(x_0,y_0) \notin W_\infty$. By definition of $W_\infty$, this means that for any $n \geq \rho$,
    the point $(x_0,y_0)$ does not belong to $ \la V_{n},0\ra$ and therefore is not in $V_n$, so we have
    $f_n(x_0,y_0) \neq 0$ for $n \ge \rho$. Consequently, for $n \ge \rho$, the holomorphic map-germs
   $$\phi_n:(\CM^d,x_0) \to \CM^e,\ x \mapsto f_{n}(x,y_0)=\sum_{|I| \leq n} a_I(y_0)x^I $$
are  non-zero at $x=x_0$.

 We now use the classical {\em Hurwitz lemma} (see e.g. \cite[Chapter 5, Theorem 2]{Ahlfors}): {\em Consider a sequence of holomorphic functions $\phi_n$ converging in the $C^0$-topology and assume that the limit $\phi_{\infty}$ is not the zero-function. Then:
  if the functions $\phi_n$ are non-zero at a point, then the limit $\phi_{\infty}$ is also non-zero at this point
  \footnote{This Hurwitz lemma, sometimes called the {\em Hurwitz theorem}, is a straightforward consequence of the Cauchy integral formula.}.}

As $f_n(x_0,y_0) \neq 0$, the coefficients $a_I$ are not all equal to zero and therefore the holomorphic map-germ
$$\phi_{\infty}:(\CM^d,x_0) \to \CM^e,\ x \mapsto  \sum_{I \in \NM^d} a_I(y_0)x^I$$
is not identically zero. From the Hurwitz lemma we conclude that  $(x_0,y_0) \notin   V_{\infty}$ and therefore  $$(V_{\infty},z_0) \subset W_{\infty},\ z_0 =(x_0,y_0)$$ 
\end{proof}
 \subsection{Proof of Theorem~\ref{T::thetheorem}}
 We keep the same notations and let 
 $$g:(\Ut \times W_\infty,0) \to (\CM^e,0) $$
 be the restriction of $f$ to $W_\infty$. By Lemma~\ref{L::limit},
 we have 
 $$(V(f),0)=(V(g),0).$$ We choose a Whitney extension $G$ of $g$. By the implicit function theorem the zero locus of $G$ is the graph of a function-germ 
 $$\b:(\CM^d,0) \to (W_\infty,0),\ x \mapsto \b(x)$$
 and we may extend the image to $\CM^e$ to get a Birkhoff function
$$B:(\CM^d,0) \to (\CM^e,0),\ x \mapsto \b(x) $$
  The Taylor expansion of $B$ at the origin is the Birkhoff series whose partial derivatives generate $W_\infty$
  by definition. Hence the function $B$ is KMS non-degenerate. This concludes the proof of the theorem.
\section{Application to the Herman conjecture}
We give a brief outlook of our proof of the Herman invariant tori conjecture, details will be published elsewhere, the reader may consult~\cite{Herman_conjecture} for further details.
\subsection{The frequency map}
In KAM theory, the invariant tori of a Hamiltonian system are parametrised by the frequencies of motions which are not very well approximated by the standard integral lattice $\ZM^e$.
In order to measure how a frequency vector $y \in \CM^e$ is approximated by the lattice, one considers the collection of resonant hyperplanes $(\S_\a)$ with 
$$\S_\a=\{ y \in \CM^e: (y,\a) =0 \},\ \a \in \ZM^e$$
where 
$$(y,\a)=\sum_{j=1}^e y_j \a_j$$
is the complexification of the Euclidean scalar product. We then fix a real positive sequence
$(\dt_\a),\ \a \in \ZM^e$, which converges to zero not too fast
and consider tubular neighbourhoods of hyperplanes
$$T_\a=\{ y \in \CM^e: |(y,\a)| < \dt_\a \},\ \a \in \ZM^e$$
 We now define the sequence $(X_n)$ by:
$$X_n=\CM^e \setminus \bigcup_{\| \a\| \leq n} T_\a $$
and this set parametrises the invariant tori, in the sense that there is a Birkhoff function
$$B: U \to  \CM^e  $$
called the {\em frequency map} whose fibres above points of $X_\infty=\bigcap_n X_n$ are (complexifications) of invariant tori. In~\cite{Symplectic_torus,Herman_conjecture}, we constructed $B$ as a map obtained from a
correspondence as we now explain.
\subsection{The small denominators ring}
We consider the polynomial ring $\CM[\omega]=\CM[\omega_1,\omega_2,\ldots,\omega_d]$ and construct a ring $SD_{\alpha}$ of {\em small denominators} at $\alpha \in \CM^d$. The ring $SD_{\alpha}$ is a subring of the field $\CM(\omega)=\CM(\omega_1,\omega_2,\ldots,\omega_d)$ of rational functions 
 defined by 
localisation of $\CM[\omega]=\CM[\omega_1,\omega_2,\ldots,\omega_d]$ with respect with the multiplicative subset $S$ generated by all linear polynomials $(\a+\omega, J), J \in \ZM^n\setminus \{0\}$:
\[SD_{\alpha}:=\CM[\omega]_{S}:=\CM[\omega,\frac{1}{(\a+\omega,J)}, J \in \ZM^n\setminus\{0\}]
\subset \CM(\omega) .\]
The elements of this ring may have poles along the resonance hyperplanes
$$H_J:=\{ \omega \in \CM^d: (\a+\omega,J)=0 \} $$
We will construct a normal form inside the algebra
$$R:=SD_{\alpha}[[\tau,p,q]] \subset \CM[[\omega,\tau, p,q]],$$
a subalgebra of the power series in $4d$ variables 
$$\omega_1,\omega_2,\ldots,\omega_d,\tau_1,\ldots,\tau_d,q_1,\ldots,q_d,p_1,\ldots,p_d .$$
We provide $R$ with the standard Poisson bracket, so that the Poisson centre of
$R$ is the ring
\[R_0:=SD_\a[[\tau]] \subset \CM[[\omega_1,\ldots,\omega_d,\tau_1,\ldots,\tau_d]].\]
We endow this ring of quasi-homogenous filtration with weights
$$w(q_i)=w(p_i)=1,\ w(\tau_i)=2,\ w(\omega_i) =0$$
We use the notations $[f]_i^j$ for the sum of terms of weighted degree $\geq i$ and $<j$ and omit the indices if $i=0$ or $j=+\infty$.
We also use the notation $O(n)$ for terms of degree $\geq n$.
\subsection{The Hamiltonian normal form iteration}
Starting from a Hamiltonian 
\[ H=\sum_{i=0}^d \alpha_i p_iq_i+O(3),\]
we first form the $\omega$-extension of $H$:
\[ F_0=\sum_{i=0}^d (\alpha_i+\omega_i) p_iq_i+O(3),\]

Then we solve a {\em homological equation} of the form:
$$v_0(A_0)=[F_0]_3^4 +t_0,\;\;\; t_0 \in  R_0+ I^2 ,$$ 
where $I$ is the ideal generated by the  $p_iq_i-\tau_i$ and $v_0$ is a Poisson derivation.
The definition might seem involved but it produces a sequence of biholomorphic maps $e^{-v_i}$ which are defined globally and not "blocked by resonances."

Exponentiating the Poisson derivation $v_0$, we produce a Poisson automorphism  $e^{-v_0}$. The application of $e^{-v_0}$ transforms $F_0$ into $F_1$, where the cubic term is  removed modulo an element of $R_0+I^2$ ; we put $A_1=A_0+t_0$. For the $n=0$ case, it turns out that $t=0$ but at the next level we have to solve
$$v_1(A_1)=[F_1]_4^6 + t_1 ,\;\;\; t_1 \in R_0+I^2$$ 
for the degree $4$ and $5$ part of $F_1$ on $A_1$ and, as a general rule $t_1 \neq 0$. 

Then the application of $e^{-v_1}$ to $F_1$ produces $F_2$, where now these terms of degree $4$ and $5$ are removed, but certain terms in $R_0+I^2$ are introduced. These remaining terms we add to $A_1$ and obtain $A_2$. Next we solve the homological equation for the terms of degree $6,7,8,9$ of $F_2$, but now on  $A_2$, etc. Thus we obtain a by iteration a sequence of triples
\[ (F_n, A_n, v_n ), \;\;\;n=0,1,2,\ldots\]
We have correspondences defined by the rational functions 
$$[e^{-v_n} \dots e^{-v_0} \omega_i]^{2^n}= \sum_{\a, |\a| \le 2^n} a_\a(\omega) \tau^\a,\ a_\a \in SD_\a $$
 precisely of the kind envisioned by Poincar\'e!  This power series is $X$-convergent, the partial sums define complex variety and the limit variety
 parametrises the pairs consisting of invariant tori and associated frequencies. 
 
 The theorem of this paper shows that the limiting correspondence contains the graph of a KMS
 non-degenerate map.  Using an adequate variant of the Kleinbock-Margulis theorem~(\cite{arithmetic}), one deduces the fact that the invariant tori form a set of positive measure.  The convergence
 of the iteration scheme proposed  is subtle, but it turns out that it can be
 streamlined using the theory of Banach functors developed in~\cite{Functors}.  In that
 set-up, the non-degeneracy argument is easy and, in fact, as this paper shows, ``almost trivial''. 
 
\bibliographystyle{amsplain}
\bibliography{master}
\end{document}